\theoremstyle{thmstyleone}
\newtheorem{theorem}{Theorem}
\newtheorem{proposition}[theorem]{Proposition}
\theoremstyle{thmstyletwo}
\theoremstyle{thmstylethree}
\begin{document}

\title{Efficient Implementation of high-order isospectral symplectic Runge-Kutta schemes}

\author*[1]{\fnm{Clauson} \sur{Carvalho da Silva}}\email{clauson@puc-rio.br}

\author[1]{\fnm{Christian} \sur{Lessig}}\email{christian.lessig@ovgu.de}
\equalcont{These authors contributed equally to this work.}

\author[2]{\fnm{Carlos} \sur{Tomei}}\email{tomei@puc-rio.br}
\equalcont{These authors contributed equally to this work.}

\affil*[1]{\orgdiv{Institut f{\"u}r Simulation und Graphik}, \orgname{Otto-von-Guericke-Universit{\"a}t}, \city{Magdeburg}, \country{Germany}}

\affil[2]{\orgdiv{Departmento de Matemática}, \orgname{PUC-Rio}, \city{Rio de Janeiro}, \country{Brazil}}

\abstract{Isospectral Runge-Kutta methods are well-suited for the numerical solution of isospectral systems such as the rigid body and the Toda lattice. More recently, these integrators have been applied to geophysical fluid models, where their isospectral property has provided insights into the long-time behavior of such systems. However, higher-order Isospectral Runge-Kutta methods require solving a large number of implicit equations. This makes the implicit midpoint rule the most commonly used due to its relative simplicity and computational efficiency. In this work, we introduce a novel algorithm that simplifies the implementation of general isospectral Runge-Kutta integrators. Our approach leverages block matrix structures to reduce the number of implicit equations per time step to a single one. This equation can be solved efficiently using fixed-point iteration. We present numerical experiments comparing performance and accuracy of higher-order integrators implemented with our algorithm against the implicit midpoint rule. Results show that, for low-dimensional systems, the higher-order integrators yield improved conservation properties with comparable computational cost. For high-dimensional systems, while our algorithm continues to show better conservation properties, its performance is less competitive, though it can be improved through parallelization.}

\keywords{Lie-Poisson integrator, Isospectral System, Block matrices, Toda Flow, Generalized rigid body}

\maketitle

\section{Introduction}

In numerical simulations of physical systems, simplicity, efficiency, and accuracy are primary concerns. It is also important that the resulting discrete numerical solutions reproduce the conservation laws of their continuous counterparts. Conservation laws allow for long-term time integrations and can be used as metrics to evaluate the accuracy of time-discrete solutions.   

Isospectral flows and Lie-Poisson systems have been extensively investigated since they arise in a number of idealized physical systems. Under a set of conditions, they can be related (see Section~\ref{sec:isospectral_Lie_Poisson}). We call the intersection isospectral Lie-Poisson systems. Their main property is isospectrality (constant spectrum), which is related to the preservation of Casimirs for the systems. Classical examples of isospectral Lie-Poisson systems are the ideal rigid body~\cite{Marsden1999} and the periodic Toda lattice in its Lax pair formulation~\cite{Toda1967, tomei2015toda}. Several algorithms have been designed to preserve the physical properties of the continuous system. 
In~\cite{deift1983ordinary}, QR-type algorithms were developed to discretize the (non-periodic) Toda flow.  
The authors in~\cite{calvo1997numerical}, showed that Runge-Kutta methods applied directly to the Lie-Poisson equation of isospectral systems do not preserve isospectrality, and they proceeded to develop Gauss-Legendre Runge-Kutta schemes that overcome this issue.

Recently,~\cite{modin2019lie} introduced Isospectral Symplectic Runge-Kutta (ISOSyRK) methods for the numerical integration of isospectral Lie-Poisson systems. The approach is based on using Lie-Poisson reconstruction to lift the systems from the Lie algebra, where the dynamics originally takes place, to canonical Hamiltonian systems on the associated phase space. There, the use of symplectic implicit Runge-Kutta methods for time-discretization results in an algorithm that preserves isospectrality and that is a Lie-Poisson integrator. Furthermore, the ISOSyRK methods have the same order as the underlying Runge-Kutta method which, together with symplecticity, bounds the Hamiltonian oscillations, making the time integrator long-term stable.

Despite their favorable properties, ISOSyRKs require for each time step a large number of implicit equations to be solved when the order of the integrator is high. In~\cite{viviani2019minimal,da2022variational} the authors show that the algorithms can be greatly simplified for the case of symplectic diagonally implicit Runge-Kutta methods (symplectic DIRKs), which are scaled compositions of the implicit midpoint rule \cite{hairer2006geometric}. These algorithms are referred to as ISOSyDIRKs.

In the present work, we introduce a novel algorithm for the implementation of general ISOSyRKs. We refer to it as the block algorithm for ISOSyRKs. It enjoys simplicity and computational complexity comparable to the ISOSyDIRKs. The block algorithm facilitates the implementation of high-order ISOSyRKs (better accuracy) with a small number of stages (better efficiency), e.g., the $6^{th}$-order 3-stage Gauss method. Furthermore, the ISOSyDIRKs have sections that need to be executed in a serial manner, while the proposed block algorithm can be fully parallelized. In~\cite{cifani2023efficient}, an isospectral system is implemented as a discretization of the spherical barotropic fluid model. Due to the large size of the matrices ($n \sim 1000$), their implementation heavily relies on a parallel computing infrastructure. The authors use the ISOSyRK associated with the implicit midpoint rule as their time-integrator. This parallel computing power can be used to
implement the block algorithm for higher-order ISOSyRK schemes, resulting in better conservation properties and comparable efficiency. This will be the subject of a following work.

The remainder of the paper is organized as follows. In section~\ref{sec:preliminaries}, we provide a brief summary on isospectral Lie-Poisson systems and the time-integrators from~\cite{modin2019lie,viviani2019minimal,da2022variational}. Section~\ref{sec:block_algorithm} describes the block algorithm for ISOSyRKs. In Section~\ref{sec:numerical}, we show numerical results for the ideal rigid body and the periodic Toda lattice. We compare the results from the implementation that uses the block algorithm and the implementations from~\cite{viviani2019minimal,da2022variational}. Directions for future work are discussed in Section~\ref{sec:conclusion}.
\section{Preliminaries}
\label{sec:preliminaries}

\subsection{Isospectral Lie-Poisson systems on matrix Lie algebras}
\label{sec:isospectral_Lie_Poisson}

General isospectral systems can be described by matrix differential equations of the form,
\begin{align}
  \label{eq:isospectral_conceputal}
  \dot{A} = \big[ B(A) , A \big] , \quad A(0) = A_0,
\end{align}
where $[\cdot,\cdot]$ is the matrix commutator, $A$ belongs to a subset $S$ of a matrix Lie algebra $\mathfrak{g}$ and $B:S \to \mathfrak{g}$ possesses properties that guarantee that the right-hand side of Equation~\eqref{eq:isospectral_conceputal} belongs to $S$. The solution $A(t)$ is known to be isospectral, i.e., its eigenvalues do not depend on $t$. See~\cite{calvo1997numerical,modin2019lie} for a detailed discussion of the properties of these systems.

Lie-Poisson systems are Hamiltonian systems on dual Lie algebras $\mathfrak{g}^*$ governed by
\begin{align}\label{eq:lie_poisson}
  \dot{\mu} 
  = \text{ad}^*_{\nabla \eta(\mu)} \mu, \quad \mu\in\mathfrak{g}^*
\end{align}
where $\eta : \mathfrak{g}^* \to \mathbb{R}$ is a Hamiltonian function and $\text{ad}^*$ is the coadjoint action of $\mathfrak{g}$ on $\mathfrak{g}^*$. For matrix Lie algebras, $\text{ad}^*$ is given by
\begin{align*}
    \left \langle\text{ad}^*_{\alpha} \mu, \beta \right \rangle
    =
    \left \langle \mu, [\alpha,\beta] \right \rangle
\end{align*}
where $\langle \cdot, \cdot \rangle$ is the pairing between $\mathfrak{g}^*$ and $\mathfrak{g}$ and $[\cdot, \cdot]$ is the usual matrix commutator. 

Isospectral systems and Lie-Poisson systems on matrix Lie algebras can be related under a set of conditions. Using the Frobenius inner product on $\mathfrak{g}$, defined by $\langle \alpha, \beta \rangle = \mathrm{tr}(\alpha^{\dagger} \beta)$, with $^\dagger$ being the conjugate transposition, the Lie algebra and its dual can be identified. Furthermore, if $\mathfrak{g}$ is a reductive Lie algebra, that is, $[\mathfrak{g},\mathfrak{g}^\dagger] \subset \mathfrak{g}$, then Equation~\eqref{eq:lie_poisson} can be written in the form of Equation~\eqref{eq:isospectral_conceputal} with $B(\cdot) = \nabla \eta(\cdot)^\dagger$. This is the case for the ideal rigid body treated in more detail in Section~\ref{sec:numerical_rb}.
Conversely, consider an isospectral system, where the map $B$ can be written as $B(\cdot) = \nabla \eta(\cdot)^\dagger$ for a Hamiltonian $\eta:\mathfrak{gl}(n) \to \mathbb{R}$ that is constant on the affine spaces given by the translations of the orthogonal complement of $S$. Such isospectral system can be extended to a Lie-Poisson system on $\mathfrak{gl}(n)$. This is the case of the periodic Toda flow, which we treat in Section~\ref{sec:numerical_toda}.

In the remainder of this work, we call a system isospectral Lie-Poisson when its dynamics are given by
\begin{align}
  \label{eq:isospectral_lie_poisson}
  \dot{W} = \big[ B(W) , W \big] , \quad W(0) = W_0
\end{align}
where $W$ belongs to a reductive matrix Lie algebra $\mathfrak{g}$, identified with its dual under the Frobenius inner product, and $B(\cdot) = \nabla \eta(\cdot)^\dagger$ for a given Hamiltonian $\eta:\mathfrak{g} \to \mathbb{R}$.

\subsection{ISOSyRK methods}
The Isospectral Symplectic Runge-Kutta (ISOSyRK) methods are a class of time stepping schemes introduced in~\cite{modin2019lie} to numerically solve Equation~\eqref{eq:isospectral_lie_poisson}. 
Instead of applying symplectic Runge-Kutta schemes directly to Equation~\eqref{eq:isospectral_lie_poisson}, the core idea of the ISOSyRK methods is to use symplectic Runge-Kutta schemes on the canonical Hamiltonian systems obtained from the isospectral Lie-Poisson systems through reconstruction to the associated phase space~\cite[Chap. 13]{Marsden1999}.

An isospectral Lie-Poisson system on a Lie algebra $\mathfrak{g}$ of a matrix Lie group $G$ with Hamiltonian $\eta:\mathfrak{g} \to \mathbb{R}$ can be pulled back to a canonical Hamiltonian system on the phase space $T^*G$ using a momentum map. This results in Hamilton's equations of the form
\begin{align}
  \label{eq:hamilton_eq_iso}
  \begin{aligned}
  \dot{g} &= g \nabla \eta(g^\dagger p) = g B(g^\dagger p)^\dagger  
  \\[4pt] 
  \dot{p} &= -p \nabla \eta(g^\dagger p)^\dagger = - p B(g^\dagger p).
  \end{aligned}
\end{align}
The system defined above is equivalent to the isospectral Lie-Poisson system since a solution $(g(t), p(t))$ defines a solution $W(t) = g(t)^{\dagger}p(t)$ for the isospectral Lie-Poisson system. For a fixed time-step $h$, applying an $s$-stage symplectic Runge Kutta scheme defined by the Butcher Tableau
\begin{align*}
\begin{array}{c|cccc}
 c_1 & a_{11}  & \ldots & a_{1s} \\[3pt]
 c_2 & a_{21} & \ldots & a_{2s}  \\[3pt]
 \vdots &  \vdots &  \vdots & \vdots \\[3pt]
 c_s & a_{s1} & \cdots  & a_{ss}\\[3pt]
\hline\rule{0pt}{1.01\normalbaselineskip}
    &  b_1  & \cdots & b_s \\[3pt]
\end{array}
\end{align*}
to the system in Equation~\eqref{eq:hamilton_eq_iso} results in $2s$ implicit equations that define intermediate points $G_i$ and $P_i$
\begin{subequations}
    \label{eq:rk:hamilton}
    \begin{align}
    G_i = g_0 + h\sum_{j=0}^s a_{ij} G_j B(M_j)^\dagger        
    \end{align}
    \begin{align}
    P_i = p_0 - h\sum_{j=0}^s a_{ij} P_j B(M_j)        
    \end{align}
\end{subequations}
where $M_{i} = G_i^\dagger P_i$, with $i = 1,\ldots,s$, and update equations
\begin{subequations}
    \begin{align}
    g_1 = g_0 + h\sum_{i=0}^s b_{i} G_i B(M_j)^T        
    \end{align}
    \begin{align}
    p_1 = p_0 - h\sum_{i=0}^s b_{i} P_i B(M_j).        
    \end{align}
\end{subequations}
The above numerical scheme is shown in~\cite{modin2019lie} to descend to a scheme on $\mathfrak{g}^* \simeq \mathfrak{g}$ given by
\begin{align}
    \label{eq:ISOSyRK_scheme}
    \begin{aligned}
    X_i &= -h\left(W_0 + \sum_{j=1}^s a_{ij} X_j\right) B(M_i)\\ 
    Y_i &= h B(M_i)\left( W_0 + \sum_{j=1}^s a_{ij} Y_j \right) \\
    K_{ij} &= h B(M_i)\left(\sum_{j'=1}^s \left(a_{ij'} X_j + a{jj'} K_{ij'}\right) \right)\\
    M_i &= W_0 + \sum_{j=1}^s a_{ij} \left(X_j + Y_j + K_{ij}\right)\\
    W_1 &= W_0 + h \sum_{i = 1}^s b_i [B(M_i),M_i]
    \end{aligned} 
\end{align}
for $i,j = 1, \ldots,s$. They are called ISOSyRK methods for isospectral Lie-Poisson systems and possess a number of desirable properties:
\begin{itemize}
    \item Equation~\eqref{eq:ISOSyRK_scheme} has the same order as the underlying Runge-Kutta method. The order of the numerical method is related to the conservation of properties such as energy. 
    
    \item They are isospectral, i.e, the eigenvalues of the numerical solution $W_k$ are independent of $k$. This is a fundamental property since isospectrality is related to the conservation of Casimirs of the continuous systems. 
    
    \item It is a Lie-Poisson integrator, i.e, the numerical solution stays on the coadjoint orbits of $\mathfrak{g}^*$.
    
\end{itemize}

\subsection{ISOSyDIRK methods}
\label{sec:isosydirks}
Implementing an $s$-stage ISOSyRK with the algorithm described by Equation~\eqref{eq:ISOSyRK_scheme} requires solving $3s + s^2$ implicit equations. A simpler implementation is obtained when the underlying Runge-Kutta method is the implicit midpoint rule. In~\cite{viviani2019minimal}, the authors showed that manipulating Equation~\eqref{eq:ISOSyRK_scheme} for the implicit midpoint rule results in an algorithm that requires solving a unique implicit equation of the form 
\begin{align}
\label{eq:ISOSyDIRK_implicit}
	W_0 = \left(\mathrm{Id} - \frac{1}{2} B(M_1) \right) M_1 \left( \mathrm{Id} + \frac{1}{2} B(M_1) \right),
\end{align}
where $\mathrm{Id}$ is the identity matrix, followed by an explicit update equation
\begin{align}
W_1 = \left(\mathrm{Id} + \frac{1}{2} B(M_1) \right) M_1 \left( \mathrm{Id} - \frac{1}{2} B(M_1) \right).
\end{align}

The implicit midpoint rule is a $2^{nd}$-order scheme. The authors in~\cite{da2022variational} introduced ISOSyDIRKs, which generalize the simplicity of this algorithm for the case where the underlying Runge-Kutta scheme is a diagonally implicit symplectic Runge-Kutta method. This allows for simpler implementations of higher-order schemes when compared to Equation~\eqref{eq:ISOSyRK_scheme}. ISOSyDIRK methods require $s$ pairs of implicit-explicit equations of the form
\begin{align*}
    W_0^i = \left(\mathrm{Id} - \frac{1}{2} B(M_0^{i+1}) \right) M_0^{i+1} \left( \mathrm{Id} + \frac{1}{2} B(M_0^{i+1}) \right)\\
    W_0^{i+1} = \left(\mathrm{Id} + \frac{1}{2} B(M_0^{i+1}) \right) M_0^{i+1} \left( \mathrm{Id} - \frac{1}{2} B(M_0^{i+1}) \right)\\
\end{align*}
to be solved for $i = 0, \ldots, s - 1$, $W_0^0 = W_0$ and $W_0^s = W_1$.  The authors in~\cite{da2022variational} also show how such algorithms can be derived from a discrete variational principle.

An interesting aspect of the algorithms introduced in~\cite{viviani2019minimal,da2022variational} is the appearance of the Cayley transform
\begin{align}
    \label{eq:cayley}
    \mathrm{cay}(\nu) = \left(\mathrm{Id} - \frac{1}{2}\nu \right)^{-1}\left( \mathrm{Id} + \frac{1}{2}\nu\right).
\end{align}
for $\nu \in \mathfrak{g}$ . Analogous to the exponential map, the Cayley transform maps $\mathfrak{g}$ to the matrix Lie Group $G$ and it allows for the description of the ISOSyDIRKs on the cotangent bundle level.
For instance, for the implicit midpoint rule, if we set initial conditions $(g_0, p_0) \in T^*G$ such that $W_0 = g_0^\dagger p_0$, we can define $g_1 = g_0 \mathrm{cay}(hB(M_1)^\dagger)$ and $p_1 = p_0 \mathrm{cay}(hB(M_1))^{-1}$. It follows that $W_1 = g_1^{\dagger}p_1$ and analogous results are true for the more general ISOSyDIRKs. This unexpected appearence of the Cayley transform in the ISOSyDIRKs opens a series of questions such as:
\begin{enumerate}
    \item Are there other maps, analogous to the Cayley transform that operate in the background of more general ISOSyRKs?
    \item If yes, can they be used to simplify the implementation of more general ISOSyRKs?
    \item Can they also be used to derive discrete variational formulations for more general ISOSyRKs, similar to those in~\cite{da2022variational, gawlik2011geometric}? 
\end{enumerate}
The results in~\cite{stern2024quadratic} seem to suggest a negative answer for those questions. However, in the next section, we define an algorithm for general ISOSyRKs where the main ideas were inspired by the above questions. This novel algorithm has similar simplicity to the ISOSyDIRKs, despite its different formulation, and facilitates the implementation of higher-order schemes such as the $2$ and $3$-stage Gauss collocation methods.

\section{Block Algorithm for ISOSyRKs}
\label{sec:block_algorithm}

For an $s$-stage ISOSyRK, the general algorithm proposed by~\cite{modin2019lie} in Equation~\eqref{eq:ISOSyRK_scheme} requires that $3s+s^2$ variables are found implicitly before the update step. For the case of ISOSyDIRKs, the algorithms proposed in~\cite{da2022variational,viviani2019minimal} require that $s$ implicit equations are solved before the update step, where $s$ is the number of stages of the associated symplectic diagonally implicit Runge-Kutta scheme. Using block matrices, we introduce an algorithm, that allows us to find the intermediate points with one unique implicit equation. We refer to it as the block algorithm for ISOSyRKs. It can be seen as a generalization of the algorithm proposed in ~\cite{viviani2019minimal} and it simplifies the implementation of general ISOSyRK methods, specially those of higher order. Next, we present the block algorithm for ISOSyRKs.

Consider a $n$-dimensional isospectral Lie-Poisson system. We can write Equation~\eqref{eq:rk:hamilton} as

\begin{align}
\label{eq:rk:hamilton:matrix}
    \begin{aligned}
    \mathbf{G} &= \mathbf{g_0} + h\mathbf{G}\mathbf{B}^\dagger\mathbf{A}^T \\        
    \mathbf{P} &= \mathbf{p_0} - h\mathbf{P}\mathbf{B}\mathbf{A}^T.
    \end{aligned}
\end{align}
       
We have:
\begin{itemize}
    \item $\mathbf{G}$ and $\mathbf{P}$ are dense block row matrices, whose columns are the intermediate points $G_i$ and $P_i$, i.e., 
    \begin{align*}
    \mathbf{G} = 
    \begin{pmatrix}    G_1 & \cdots & G_s   \end{pmatrix} \quad \text{and} \quad \mathbf{P} = 
    \begin{pmatrix}    P_1 & \cdots & P_s   \end{pmatrix}.  
    \end{align*}
    Since $G_i, P_i \in \mathbb{R}^{n \times n}$, we have that $\mathbf{G}, \mathbf{P} \in \mathbb{R}^{n \times sn}$.
    
    \item $\mathbf{g_0}$ and $\mathbf{p_0}$ are dense block row matrices where repetitions of $g_0$ and $p_0$ define their columns, respectively, i.e.,  
    \begin{align*}
    \mathbf{g_0} = 
    \begin{pmatrix}    g_0 & \ldots & g_0   \end{pmatrix} \quad \text{and} \quad \mathbf{p_0} = 
    \begin{pmatrix}    p_0 & \ldots & p_0   \end{pmatrix}.  
    \end{align*}
    We have $g_0, p_0 \in \mathbb{R}^{n \times n}$, and therefore $\mathbf{g_0}, \mathbf{p_0} \in \mathbb{R}^{n \times sn}$.
    
    \item $\mathbf{B} \in \mathbb{R}^{sn \times sn}$ is a block diagonal matrix with $B(M_i)$, for $i = 1, \ldots, s$ in its diagonal, i.e.,
    \begin{align*}
    \mathbf{B} = 
    \begin{pmatrix}
         B(M_1)& \ldots & \mathbf{0}\\
        \vdots & \ddots & \vdots\\
        \mathbf{0} & \ldots & B(M_s)
    \end{pmatrix}.
    \end{align*}    

    \item $\mathbf{A}\in \mathbb{R}^{sn \times sn}$ is the block matrix
    \begin{align*}
        \mathbf{A} = 
        \begin{pmatrix}
            a_{11}\mathrm{Id} & \ldots & a_{1s}\mathrm{Id}\\
            \vdots & \ddots & \vdots\\
            a_{s1}\mathrm{Id} & \ldots & a_{ss}\mathrm{Id}
        \end{pmatrix}
    \end{align*}
    where $\mathrm{Id} \in \mathbb{R}^{n\times n}$ is the identity matrix.
\end{itemize}
Simple manipulation of Equation~\eqref{eq:rk:hamilton:matrix} implies 

\begin{align}
    \label{eq:block:implicit}
    \mathbf{W_0} = \left( \mathbf{Id} - h\mathbf{A} \mathbf{B}  \right) \mathbf{M} \left( \mathbf{Id} + h\mathbf{B}\mathbf{A}^T \right)
\end{align}
where $\mathbf{W_0} = \mathbf{g_0}^{\dagger}\mathbf{p_0}$ and $\mathbf{M} =\mathbf{G}^{\dagger}\mathbf{P}$. Notice that $\mathbf{M}$ carries the intermediate points $M_i$ on its diagonal. Therefore, finding $\mathbf{M}$ that solves the implicit Equation~\eqref{eq:block:implicit} is enough for the explicit update $W_1 = W_0 + h \sum_{i=0}^s b_i [B(M_i), M_i]$ from the ISOSyRK scheme to be computed. We call it the block algorithm for ISOSyRK schemes.

Clearly, Equation~\eqref{eq:ISOSyDIRK_implicit} is a particular case of Equation~\eqref{eq:block:implicit}, for the underlying Runge-Kutta method being the implicit midpoint rule. Therefore, the block algorithm can be seen as a generalization of the minimal-variable method introduced in~\cite{viviani2019minimal}. Notice that in the latter, the implicit equation to be solved consists of $n \times n$ matrices, while for the block algorithm, the matrices belong to $\mathbb{R}^{sn \times sn}$. Despite the increase in dimensionality, we show in Section~\ref{sec:numerical} that higher-order ISOSyRKs offer better conservation properties. Furthermore, the block algorithm inherits the simplicity of the minimal-variable algorithm proposed in~\cite{viviani2019minimal}, since a single implicit equation needs to be solved for each integration step. 

The simplicity of the block algorithm facilitates the implementation of general ISOSyRK schemes in comparision to the algorithm proposed in \cite{modin2019lie}. Equation~\eqref{eq:block:implicit} involves solely matrix-matrix multiplications which can be fully parallelized, in contrast to ISOSyDIRK schemes from~\cite{da2022variational} which have intrinsic serial aspects to it. 

The authors in~\cite{benzi2023solving} introduce efficient methods for solving Equation~\eqref{eq:ISOSyDIRK_implicit}. 
We describe similar methods to numerically solve Equation~\eqref{eq:block:implicit}. In the propositions below, we present proofs on why these methods work, which are simpler than those in~\cite{benzi2023solving}.
For an $s$-stage symplectic implicit Runge-Kutta scheme, let $\mathbf{W_0}, \mathbf{Id}, \mathbf{A}$ be the fixed matrices defined as above. For each matrix $\mathbf{X} \in \mathfrak{gl}(sn)$, denote by
$X_i$, its $i$-th diagonal $n \times n$ block. For $B: \mathfrak{gl}(n) \to \mathfrak{gl}(n)$ linear, let $\mathbf{B}: \mathfrak{gl}(sn) \to \mathfrak{gl}(sn)$ be the map where $\mathbf{B}(\mathbf{X})$ is a block matrix with $B(X_{i})$, for $i = 1, \ldots, s$ on its diagonal.

\begin{proposition}
There is $\epsilon >0$ and an open neighborhood $V_{\mathbf{W}_0} \subset \mathfrak{gl}(sn)$ of $\mathbf{W}_0$, such that, for $h \in (-\epsilon, \epsilon)$, 
\begin{align}
    \label{eq:proof_implicit}
    \mathbf{W_0} = \left( \mathbf{Id} - h\mathbf{A} \mathbf{B}(\mathbf{X})  \right) \mathbf{X} \left( \mathbf{Id} + h\mathbf{B}(\mathbf{X})\mathbf{A}^T \right)
\end{align}
has a unique solution  $\mathbf{X} \in V_{{\mathbf{W}_0}}$.    
\end{proposition}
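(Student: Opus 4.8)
The plan is to read Equation~\eqref{eq:proof_implicit} as a smooth equation parametrized by $h$ and to invoke the Implicit Function Theorem at $h = 0$. First I would define the map $F \colon \mathbb{R} \times \mathfrak{gl}(sn) \to \mathfrak{gl}(sn)$ by
\begin{align*}
F(h, \mathbf{X}) = \left( \mathbf{Id} - h\mathbf{A}\mathbf{B}(\mathbf{X}) \right) \mathbf{X} \left( \mathbf{Id} + h\mathbf{B}(\mathbf{X})\mathbf{A}^T \right) - \mathbf{W_0},
\end{align*}
so that solutions of~\eqref{eq:proof_implicit} are exactly the zeros of $F(h, \cdot)$. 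Since $B$ is assumed linear, the induced block map $\mathbf{X} \mapsto \mathbf{B}(\mathbf{X})$ is linear, and hence $F$ is polynomial in the entries of $\mathbf{X}$ and in $h$ (degree three in $\mathbf{X}$, degree two in $h$). In particular $F$ is $C^\infty$, which is all the regularity the Implicit Function Theorem requires.

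Next I would establish the base point. Setting $h = 0$ collapses the product to $F(0, \mathbf{X}) = \mathbf{X} - \mathbf{W_0}$, so that $F(0, \mathbf{W_0}) = 0$. Moreover, expanding $F$ shows that every term other than $\mathbf{X}$ carries a factor of $h$; differentiating in $\mathbf{X}$ and evaluating at $h = 0$ therefore leaves only the derivative of $\mathbf{X} \mapsto \mathbf{X}$, so the partial derivative $D_{\mathbf{X}} F(0, \mathbf{W_0})$ equals the identity map on $\mathfrak{gl}(sn)$, which is trivially invertible. The Implicit Function Theorem then furnishes an $\epsilon > 0$, an open neighborhood $V_{\mathbf{W}_0}$ of $\mathbf{W_0}$, and a smooth branch $h \mapsto \mathbf{X}(h)$ with $\mathbf{X}(0) = \mathbf{W_0}$ such that, for each $h \in (-\epsilon, \epsilon)$, $\mathbf{X}(h)$ is the unique zero of $F(h, \cdot)$ inside $V_{\mathbf{W}_0}$. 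This is precisely the asserted existence and local uniqueness.

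Finally I would note an alternative, constructive route that simultaneously justifies the fixed-point iteration advertised for the numerics. Expanding the right-hand side and isolating the $h$-independent term rewrites~\eqref{eq:proof_implicit} as a fixed-point equation $\mathbf{X} = \mathbf{W_0} + h\,\Psi_h(\mathbf{X})$, where $\Psi_h$ collects the remaining polynomial terms and has bounded derivative on bounded sets. On any fixed bounded neighborhood of $\mathbf{W_0}$ the map $\mathbf{X} \mapsto \mathbf{W_0} + h\,\Psi_h(\mathbf{X})$ has Lipschitz constant $O(h)$ and maps the neighborhood into itself once $\lvert h \rvert$ is small, so it is a contraction; the Banach fixed-point theorem then delivers the same unique solution and shows that Picard iteration converges to it. I do not expect a genuine obstacle here: the statement is a soft local result, and the only points needing care are the smoothness of $F$ (handled by the linearity of $B$) and the verification that the linearization at $h = 0$ is the identity, after which both arguments close routinely. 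The one caveat worth flagging is that the conclusion is local in $h$, matching the small-time-step regime in which the scheme is actually run.
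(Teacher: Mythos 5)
Your argument coincides with the paper's own proof: both define the same auxiliary map (your $F$ is the paper's $G$), verify that it vanishes at $(0,\mathbf{W}_0)$, observe that $\partial F/\partial\mathbf{X}(0,\mathbf{W}_0)=\mathbf{Id}$ because every other term carries a factor of $h$, and conclude by the Implicit Function Theorem. Your closing remark on the contraction/fixed-point formulation is also exactly the content of the paper's subsequent proposition, so the proposal is correct and essentially identical in approach.
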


\begin{proof}
Given some $\epsilon$ and $V_{\mathbf{W}_0}$ to be specified later, set 
\begin{align*}
 G: (\epsilon, \epsilon) \times V_{\mathbf{W}_0} &\to \mathfrak{gl}(sn) \\
(h, \mathbf{X}) &\mapsto \left( \mathbf{Id} - h\mathbf{A} \mathbf{B}(\mathbf{X})  \right) \mathbf{X} \left( \mathbf{Id} + h\mathbf{B}(\mathbf{X})\mathbf{A}^T \right) - \mathbf{W}_0 \ .     
\end{align*}
Clearly, $G(0, \mathbf{W}_0) = 0$. We use the implicit function theorem to show that $G^{-1}(0)$ is a curve through $(0, \mathbf{W}_0)$  transversal to the plane $h=0$. We must check that $\frac{\partial G}{\partial \mathbf{X}}(0, \mathbf{W}_0)$ is an invertible matrix. Indeed,
\[ DG(0, \mathbf{W}_0)(\delta h, \delta \mathbf{X}) \ = \  (  A B(\mathbf{W}_0) \mathbf{W}_0 -  \mathbf{W}_0 B(\mathbf{W}_0)A^T) \delta h + \delta \mathbf{X} \  \]
so that $\frac{\partial G}{\partial \mathbf{X}}(0, \mathbf{W}_0) = \mathbf{Id}$.
Thus the choice of $\epsilon$ and $V_{\mathbf{W}_0}$ are determined by the neighborhood of $(0, W_0)$ on which the implicit theorem holds. 
\end{proof}

\bigskip

We now prove that iteration obtains the solutions of \eqref{eq:proof_implicit}.

\begin{proposition} For a fixed, small $h$ and an open neighborhood of $\mathbf{W}_0$ contained in $V_{\mathbf{W}_0}$, the map
\[ C(\mathbf{X}) = \mathbf{X} - G(h, \mathbf{X})\]
is a contraction. In particular, a solution of Equation \eqref{eq:proof_implicit} is obtained by iterating $C$ starting with the initial condition $\mathbf{W}_0$.
\end{proposition}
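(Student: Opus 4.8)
The plan is to apply the Banach fixed-point theorem on a suitably small closed ball around $\mathbf{W}_0$. First I would compute the Fréchet derivative of $C$ with respect to $\mathbf{X}$. Since $C(\mathbf{X}) = \mathbf{X} - G(h, \mathbf{X})$, we have $D_{\mathbf{X}} C = \mathrm{id} - \frac{\partial G}{\partial \mathbf{X}}$, where $\mathrm{id}$ is the identity operator on $\mathfrak{gl}(sn)$. The preceding proposition already established that $\frac{\partial G}{\partial \mathbf{X}}(0, \mathbf{W}_0) = \mathbf{Id}$, i.e.\ the identity operator, so that $D_{\mathbf{X}} C(0, \mathbf{W}_0) = 0$. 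This is the crucial observation: the derivative of the iteration map vanishes at the base point, which is exactly what makes $C$ a contraction nearby.

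The entries of $G(h, \mathbf{X})$ are polynomials in $h$ and in the entries of $\mathbf{X}$, so $\frac{\partial G}{\partial \mathbf{X}}$, and hence $D_{\mathbf{X}} C$, depends continuously on $(h, \mathbf{X})$. Because $D_{\mathbf{X}} C(0, \mathbf{W}_0) = 0$, continuity yields a number $\epsilon' > 0$, a radius $r > 0$, and a constant $k \in (0,1)$ such that $\| D_{\mathbf{X}} C(h, \mathbf{X}) \| \le k$ whenever $|h| < \epsilon'$ and $\mathbf{X}$ lies in the closed ball $\overline{B}(\mathbf{W}_0, r) \subset V_{\mathbf{W}_0}$. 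Since this ball is convex, the mean value inequality gives $\| C(\mathbf{X}) - C(\mathbf{Y}) \| \le k \| \mathbf{X} - \mathbf{Y} \|$ for all $\mathbf{X}, \mathbf{Y}$ in the ball, which is the desired contraction estimate.

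It remains to ensure that $C$ maps $\overline{B}(\mathbf{W}_0, r)$ into itself so that the iteration is well defined. Observe that $C(\mathbf{W}_0) - \mathbf{W}_0 = -G(h, \mathbf{W}_0)$, and since $G(0, \mathbf{W}_0) = 0$ with $G$ continuous in $h$, I may shrink $\epsilon'$ so that $\| G(h, \mathbf{W}_0) \| \le (1-k) r$ for $|h| < \epsilon'$. Then, for $\mathbf{X} \in \overline{B}(\mathbf{W}_0, r)$, the triangle inequality and the contraction estimate give $\| C(\mathbf{X}) - \mathbf{W}_0 \| \le \| C(\mathbf{X}) - C(\mathbf{W}_0) \| + \| C(\mathbf{W}_0) - \mathbf{W}_0 \| \le k r + (1-k) r = r$, so $C$ is a self-map of the closed ball. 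The Banach fixed-point theorem then provides a unique fixed point $\mathbf{X}^\ast$, which satisfies $G(h, \mathbf{X}^\ast) = 0$ and hence solves Equation~\eqref{eq:proof_implicit}; moreover the iterates defined by $\mathbf{X}_{m+1} = C(\mathbf{X}_m)$ with $\mathbf{X}_0 = \mathbf{W}_0$ converge to $\mathbf{X}^\ast$.

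The main obstacle is the bookkeeping that couples the three small quantities: the step size $h$, the radius $r$, and the contraction constant $k$. The contraction bound and the self-mapping condition must hold simultaneously on the same ball and for the same range of $h$, so one must first fix $k$ and $r$ from the continuity of $D_{\mathbf{X}} C$ near $(0, \mathbf{W}_0)$, and only then shrink $\epsilon'$ to enforce $\| G(h, \mathbf{W}_0) \| \le (1-k) r$. All the analytic work is elementary because $G$ is polynomial and $\mathfrak{gl}(sn)$ is finite dimensional; no estimate of $D_{\mathbf{X}} C$ away from the base point is needed beyond continuity. The uniqueness of the fixed point within this ball is also consistent with the local uniqueness already obtained from the implicit function theorem in the preceding proposition.
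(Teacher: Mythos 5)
Your proposal is correct and follows essentially the same route as the paper's proof: both exploit that $\frac{\partial G}{\partial \mathbf{X}}(0,\mathbf{W}_0)=\mathbf{Id}$ forces $D_{\mathbf{X}}C$ to be small near $(0,\mathbf{W}_0)$, so that $C$ is a contraction on a small ball and Banach iteration from $\mathbf{W}_0$ converges. Your write-up is in fact more careful than the paper's, since you explicitly verify the self-mapping condition $\| C(\mathbf{X}) - \mathbf{W}_0 \| \le r$, a step the paper only asserts.
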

\begin{proof}
For a fixed $h$, $G(h,\mathbf{X}) = 0$ means that $\mathbf{X}$ is a fixed point of $C$.
From the proof of the proposition above, $\frac{\partial G}{\partial \mathbf{X}}(0, \mathbf{W}_0) = \mathbf{Id}$ and since $G$ is a $C^1$ map, 
\[ \frac{\partial C}{\partial \mathbf{X}}(h, \mathbf{W}_0) = \mathbf{Id} - \frac{\partial G}{\partial \mathbf{X}}(h, \mathbf{W}_0)\]
is arbitrarily close to zero, for small $h$. Thus $C$ is a contraction keeping invariant a small ball around $\mathbf{W}_0$ and, as consequence, for $h$ small enough, the solution for Equation~\eqref{eq:proof_implicit} can be found with fixed point iterations
 \begin{align*}
 	\mathbf{X}_{n+1} = C(h, \mathbf{X}_n)
 \end{align*}
 starting with $\mathbf{X}_0 = \mathbf{W}_0$.
\end{proof}

A further investigation of how small $h$ needs to be for the map $C$ to be a contraction is left for future work. For the numerical solution of Equation~\eqref{eq:ISOSyDIRK_implicit}, matrix-matrix multiplication bounds the complexity of each fixed-point iteration step to $O(n^3)$. For higher-order ISOSyRKs, the block algorithm requires the dimensionality of the implicit Equation~\eqref{eq:block:implicit} to increase. However, for low-dimensional isospectral systems, $s$ and $n$ are comparable, and the numerical experiments in Section~\ref{sec:numerical} show that the time execution of the block algorithm does not suffer with the higher dimensionality when compared to ISOSyDIRKs of the same order. For high-dimensional isospectral systems, e.g., matrix fluid models~\cite{cifani2023efficient}, $s \ll n$ and the computational complexity of each fixed-point iteration step remains $O(n^3)$.

\section{Numerical experiments}
\label{sec:numerical}

In this section, we compare the conservation properties of four ISOSyRK integrators:
\begin{itemize}
    \item \textbf{1s-SyDIRK}: Also known as the implicit midpoint rule, which is a $2^{nd}$-order ISOSyRK, implemented with the algorithm from~\cite{da2022variational,viviani2019minimal}.  
    \item \textbf{7s-SyDIRK}: A 7-stage $6^{th}$-order ISOSyDIRK~\cite{jiang2015sixth}, implemented with the algorithm from~\cite{da2022variational}.
    \item \textbf{2s-Gauss}: The 2-stage Gauss method~\cite{hairer2006geometric}, which is a $4^{th}$-order ISOSyRK, implemented with the block algorithm proposed above. 

    \item \textbf{3s-Gauss}: The 3-stage Gauss method~\cite{hairer2006geometric}, which is a $6^{th}$-order ISOSyRK, implemented with the block algorithm proposed above.
\end{itemize}
We run numerical tests for two isospectral Lie-Poisson systems, the rigid body and the periodic Toda lattice. For the former, we also test its generalized version in $n$-dimensions for $n=10,20$ and $50$. The block algorithm facilitates the implementation of the 3s-Gauss method, and its superior conservation properties illustrate the advantages of the block algorithm in comparison to the other algorithms.

\subsection{Rigid body}
\label{sec:numerical_rb}
The classical isospectral Lie-Poisson system is the ideal rigid body~\cite[Ch. 15]{Marsden1999}. In matrix form, $W \in \mathfrak{so}(3)$ is the angular momentum in the body frame and the Hamiltonian is defined as
\begin{align}
    \label{eq:rb_hamiltonian}
    \eta(W) = \frac{1}{2} \langle \mathcal{I}^{-1} W, W\rangle
\end{align}
where $\mathcal{I}$, the body inertia tensor, is a symmetric operator $\mathfrak{so}(3) \to \mathfrak{so}(3)$. The Lie-Poisson equations take the form
\begin{align}
    \label{eq:rb_lie-poisson}
    \dot{W} = -\left[ \mathcal{I}^{-1} W, W\right]
\end{align}
for $B(W) = \nabla \eta(W)^\dagger = -\mathcal{I}^{-1}W$.

\begin{figure}[h]
\centering
\includegraphics[trim={0.0 12.0 0.0 0.0},clip,width=\textwidth]{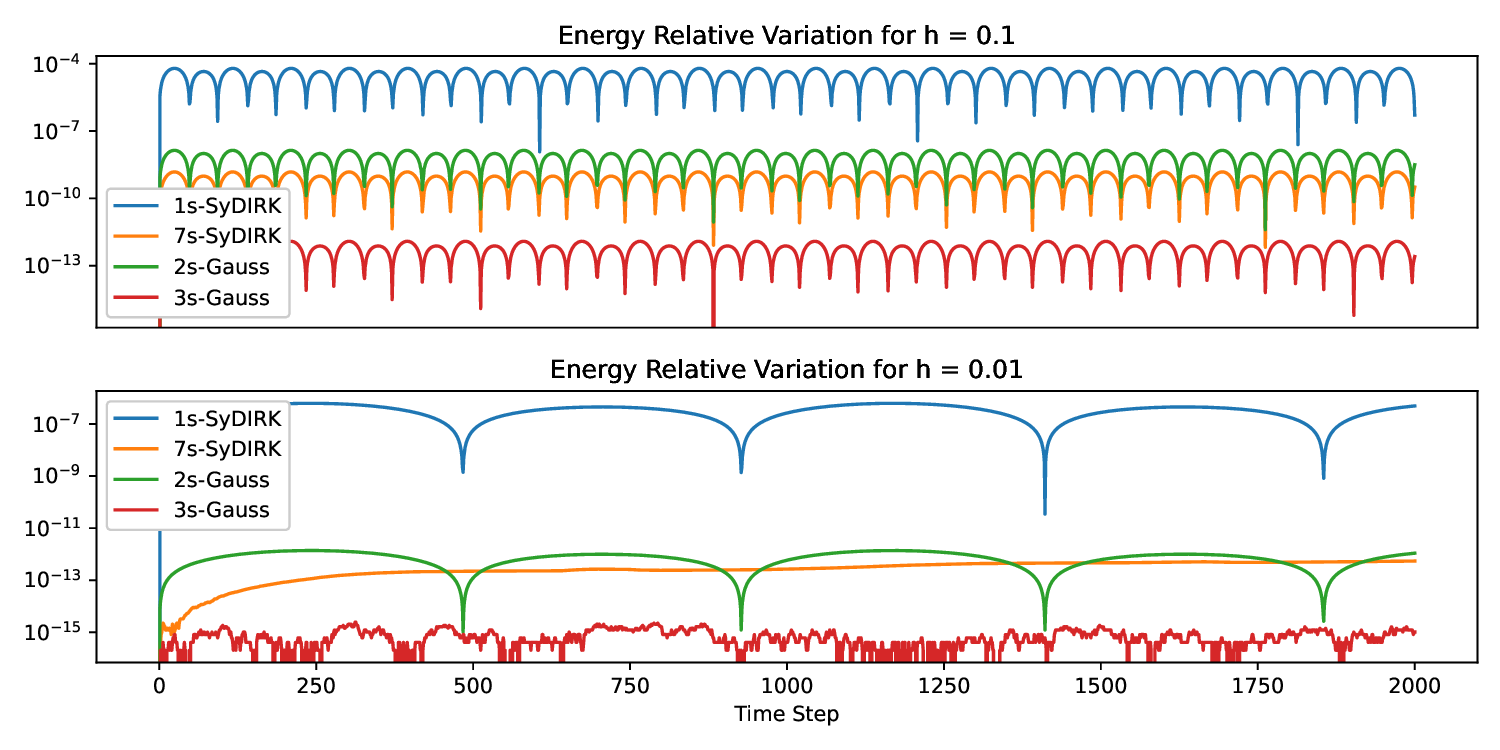} 
\caption{Energy conservation of the rigid body with four ISOSyRK integrators and time step $h =0.1$ (top) and $h = 0.01$ (bottom). We implemented 1s-SyDIRK and 7s-SyDIRK with the algorithm from~\cite{da2022variational,viviani2019minimal} and 2s-Gauss and 3s-Gauss with the block algorithm. Table~\ref{tab:rb_times} shows the time execution for each simulation.}
\label{fig:rb_hamiltonian_preservation}
\end{figure}

Figure~\ref{fig:rb_hamiltonian_preservation} shows the relative Hamiltonian variation for $N = 2000$ time steps for step sizes of $h = 0.1$ and $h=0.01$. The initial condition $W_0$ is a randomly generated skew-symmetric matrix 
and we used fixed-point iterations with tolerance $10^{-15}$ (for the Fröbenius norm) for all integrators. As expected, increasing the order of the time-integrator results in better energy conservation. However, Table~\ref{tab:rb_times} indicates that the block algorithm for the $3s$-Gauss method is less time-consuming than the same order $7s$-SyDIRK implemented with the algorithm proposed in~\cite{da2022variational}. Despite the higher dimension of the implicit equations solved by fixed-point iterations, Table~\ref{tab:rb_iterations} shows that the number of iterations needed to achieve a tolerance of $10^{-15}$  does not increase significantly from the $\textbf{1s-SyDIRK}$ integrator to the  $\textbf{2s-Gauss}$ and $\textbf{3s-Gauss}$ integrators. It also shows that there seems to exist a trade-off between smaller time steps and faster convergence of the fixed-point solver.

\begin{table}[h]
    \centering
\begin{tabular*}{\textwidth}{@{\extracolsep\fill}lcccc}
        \hline
         & \textbf{1s-SyDIRK} & \textbf{7s-SyDIRK} & \textbf{2s-Gauss} & \textbf{3-Gauss} \\ \hline
        $h = 0.1$ & 0.8473 & 6.0511 & 3.1222 & 4.0645 \\ \hline
        $h = 0.01$ & 0.6170 & 4.7073 & 2.4780 & 4.1032 \\ \hline
\end{tabular*}
    \caption{Time execution in seconds for each simulation of the rigid body system.}
    \label{tab:rb_times}
\end{table}
\begin{table}[h]
    \centering
\begin{tabular*}{\textwidth}{@{\extracolsep\fill}lccc}
        \hline
         & \textbf{1s-SyDIRK} & \textbf{2s-Gauss} & \textbf{3s-Gauss} \\ \hline
        $h = 0.1$ & 8 & 11 & 10 \\ \hline
        $h = 0.01$ & 5 &  6 & 6 \\ \hline
    \end{tabular*}
    \caption{Maximum number of fixed point iterations to achieve a tolerance of $10^{-15}$ as a solution of the implicit equations with the $\textbf{1s-SyDIRK}$, $\textbf{2s-Gauss}$ and $\textbf{3s-Gauss}$ integrators for the simulation of the rigid body system. The $\textbf{7s-SyDIRK}$ integrator is left out since it is a composition of \textbf{1s-SyIRK} steps with scaled $h$.}
    \label{tab:rb_iterations}
\end{table}

\subsection{Toda lattice}
\label{sec:numerical_toda}

An isospectral system that is not initially Hamiltonian but can be extended to a Lie-Poisson system in $\mathfrak{gl}(n)$ is the periodic Toda lattice in its Lax pair formulation~\cite{Toda1967,deift1983ordinary,modin2019lie}. The Hamiltonian $\eta:\mathfrak{gl}(n)\to\mathbb{R}$ is given by
\begin{align}
    \eta(W) = 2 \mathrm{Tr}(W^2) - \frac{1}{2}\mathrm{Tr}(W^{\dagger}B(W)) 
\end{align}
where
\begin{align*}
    B(W) = 
    \begin{pmatrix}
        0 & W_{12} & 0 & \cdots & - W_{1n}\\
        -W_{21} & 0 & W_{23} & \cdots & 0 \\
        0 & -W_{32} & 0 & \cdots & 0 \\
        \vdots & \vdots & \vdots & \ddots & \vdots\\
        W_{n1} & 0 & 0 & \cdots & 0
    \end{pmatrix}
\end{align*}
The dynamic equation is of the form of Equation~\eqref{eq:isospectral_lie_poisson} and in Fig.~\ref{fig:toda_hamiltonian_preservation} we show the conservation results for the extended periodic Toda lattice with $N = 1000$ time steps and step sizes of $h = 0.1$ and $h=0.01$. Here we use $n=4$ with the initial condition given by
\begin{align*}
    W_0 = \begin{pmatrix}
         -1 & -1 & 0 & 1\\
         -1 & 1 & 1 & 0\\
         0 & 1  & -1 & -1\\
         1 & 0 & -1 & 1
    \end{pmatrix},
\end{align*} 
which is also used in the numerical experiments for the Toda lattice in~\cite{modin2019lie,viviani2019minimal,da2022variational}. We used fixed-point iterations to solve the required implicit equations with tolerance $10^{-14}$. The maximum number of iterations to achieve the tolerance is described in Table~\ref{tab:toda_iterations} for each integrator. 
As in the previous experiment, higher order time-integrators result in better energy conservation. Furthermore, Table~\ref{tab:toda_times} indicates that the block algorithm implementation of $\textbf{3s-Gauss}$ is less time-consuming than the same order $\textbf{7s-SyDIRK}$ implemented with the algorithm proposed in~\cite{da2022variational}, despite the higher dimension of its implicit equation.

\begin{figure}[h]
\centering
\includegraphics[width=\textwidth]{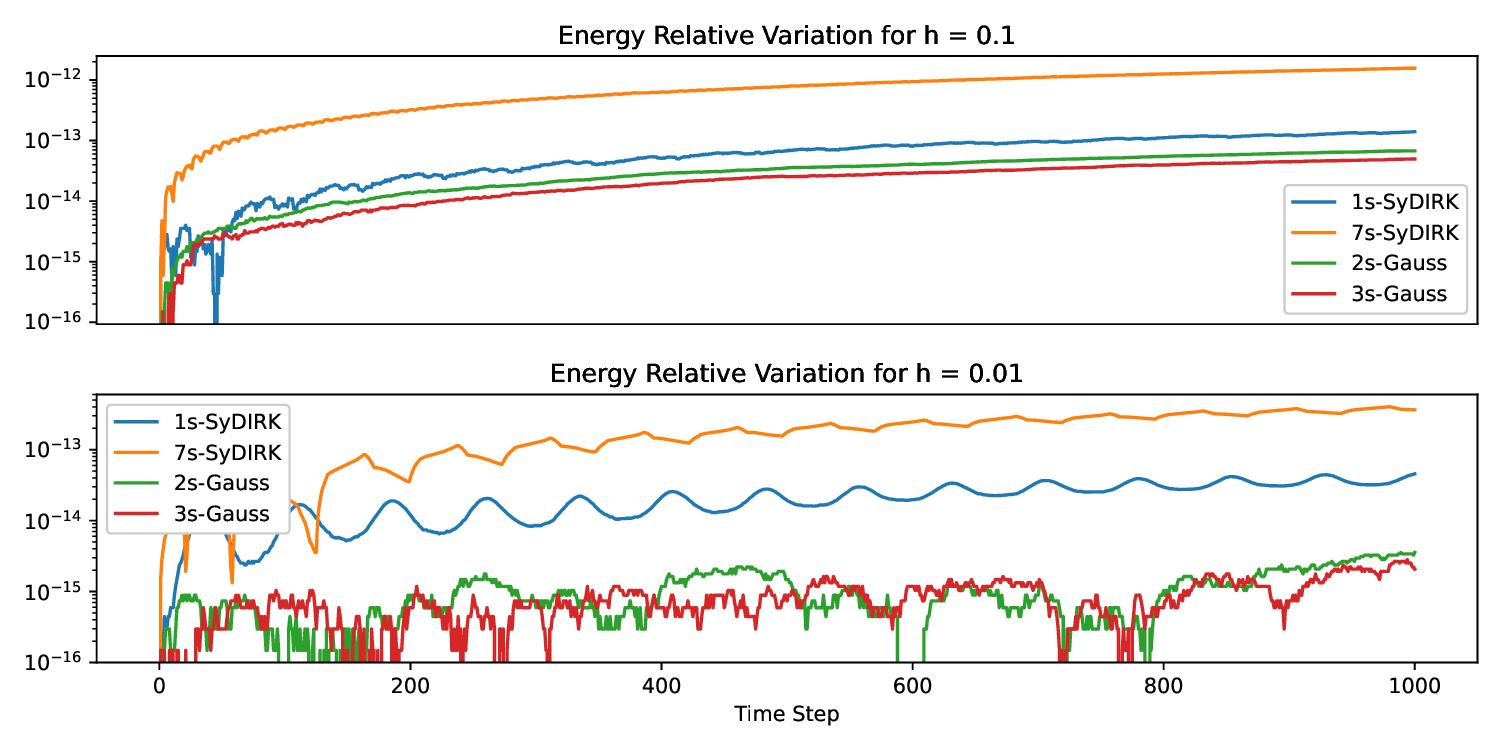} 
\caption{Top: Energy conservation of the Toda lattice with four ISOSyRK integrators and time step $h =0.1$. Bottom: Energy conservation for four ISOSyRK integrators with time step h = 0.01. We implemented 1s-SyDIRK and 7s-SyDIRK with the algorithm from~\cite{da2022variational,viviani2019minimal} and 2s-Gauss and 3s-Gauss with the block algorithm. Table~\ref{tab:toda_times} shows the time execution for each simulation.}
\label{fig:toda_hamiltonian_preservation}
\end{figure}

\begin{table}[h!]
    \centering
\begin{tabular*}{\textwidth}{@{\extracolsep\fill}lcccc}        
	\hline
         & \textbf{1s-SyDIRK} & \textbf{7s-SyDIRK} & \textbf{2s-Gauss} & \textbf{3-Gauss} \\ \hline
        $h = 0.1$ & 0.9575 & 6.0116 & 2.0907 & 2.8484 \\ \hline
        $h = 0.01$ & 0.4197 & 2.7707 & 1.1158 & 1.5020 \\ \hline
\end{tabular*}
    \caption{Time execution for each in seconds simulation of the Toda Lattice.}
    \label{tab:toda_times}
\end{table}

\begin{table}[h!]
    \centering
\begin{tabular*}{\textwidth}{@{\extracolsep\fill}lccc}       
        \hline
         & \textbf{1s-SyDIRK} & \textbf{2s-Gauss} & \textbf{3s-Gauss} \\ \hline
        $h = 0.1$ & 23 & 17 & 16 \\ \hline
        $h = 0.01$ & 8 &  8 & 8 \\ \hline
\end{tabular*}
    \caption{Maximum number of fixed point iterations to achieve a tolerance of $10^{-14}$ as a solution of the implicit equations with the $\textbf{1s-SyDIRK}$, $\textbf{2s-Gauss}$ and $\textbf{3s-Gauss}$ integrators for the simulation of the Toda lattice. The $\textbf{7s-SyDIRK}$ integrator is left out since it is a composition of \textbf{1s-SyIRK} steps with scaled $h$.}
    \label{tab:toda_iterations}
\end{table}

\subsection{Generalized Rigid Body}

In order to illustrate the effects of increasing dimensionality on the performance, we compare the integrators for the generalized rigid body. The dynamics is analogous to the classical rigid body system from Section~\ref{sec:numerical_rb} with the difference that the Lie algebra over which the system is defined is $\mathfrak{so}(n)$ instead of $\mathfrak{so}(3)$. We use $n = 10, 20, 50$ with $N = 2000$ time steps, step size $h = 0.01$ and tolerance of $10^{-14}$ for the fixed-point iterations (the Fröbenius norm). 
Figure~\ref{fig:generalized_rb_hamiltonian_preservation} shows that the higher-order integrators display better conservation of the Hamiltonian for all dimensions. That indicates that using higher-order integrators, such as $\textbf{3s-Gauss}$, are better suited for long-time simulations.
Tables~\ref{tab:generalized_rb_times} and~\ref{tab:generalized_rb_iterations} show that increasing the dimension of the system, combined with the high-dimensional nature of the block-algorithm, makes the average time-integration for each time step slower than the ISOSyDIRKs. However, as already mentioned, parallelization could mitigate that issue.

\begin{figure}[h]
\centering
\includegraphics[width=\textwidth]{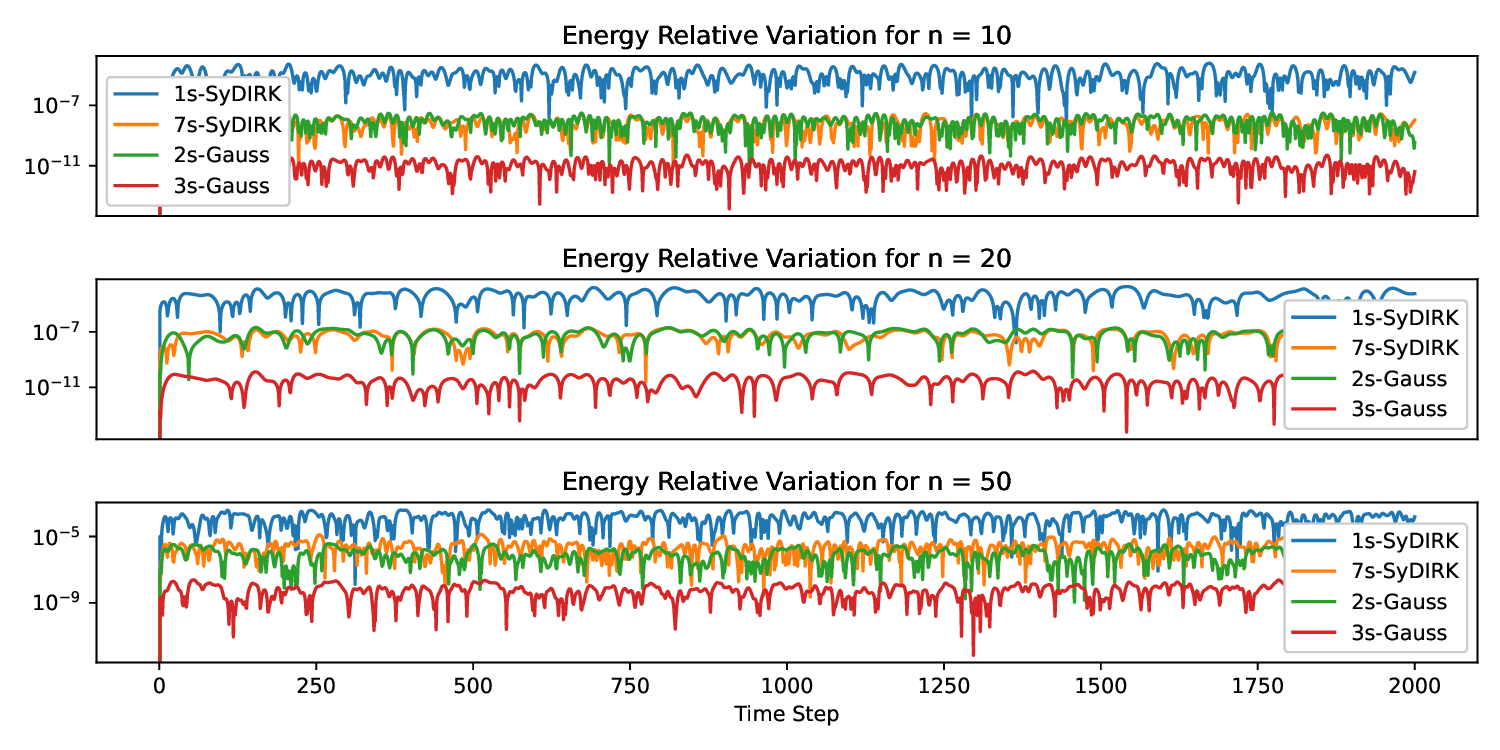} 
\caption{Energy conservation of the generalized rigid body with four ISOSyRK integrators for $n = 10$ (top), $n=20$ (middle) and $n = 50$ (bottom). We implemented 1s-SyDIRK and 7s-SyDIRK with the algorithm from~\cite{da2022variational,viviani2019minimal} and 2s-Gauss and 3s-Gauss with the block algorithm. Table~\ref{tab:generalized_rb_times} shows the time execution for each simulation.}
\label{fig:generalized_rb_hamiltonian_preservation}
\end{figure}

\begin{table}[h!]
    \centering
\begin{tabular*}{\textwidth}{@{\extracolsep\fill}lcccc}        
\hline
         & \textbf{1s-SyDIRK} & \textbf{7s-SyDIRK} & \textbf{2s-Gauss} & \textbf{3-Gauss} \\ \hline
        $n = 10$ & 1.2789 & 8.8158 & 3.7640 & 5.6305 \\ \hline
        $n = 20$ & 1.4853 & 10.1409 & 7.2950 & 14.3572 \\ \hline
        $n = 50$ & 9.0200 & 68.1599 & 57.1546 & 112.5086 \\ \hline

    \end{tabular*}
    \caption{Time execution in seconds for each simulation of the generalized rigid body for $n=10, 20$ and $50$.}
    \label{tab:generalized_rb_times}
\end{table}

\begin{table}[h!]
    \centering
\begin{tabular*}{\textwidth}{@{\extracolsep\fill}lccc}
        \hline
         & \textbf{1s-SyDIRK} & \textbf{2s-Gauss} & \textbf{3s-Gauss} \\ \hline
        $n = 10$ & 15 & 11 & 11 \\ \hline
        $n = 20$ & 11 &  14 & 13 \\ \hline
        $n = 50$ & 21 &  24 & 21 \\ \hline

    \end{tabular*}
    \caption{Maximum number of fixed point iterations to achieve a tolerance of $10^{-14}$ as a solution of the implicit equations with the $\textbf{1s-SyDIRK}$, $\textbf{2s-Gauss}$ and $\textbf{3s-Gauss}$ integrators for the simulation of the generalized rigid body with $n=10, 20$ and $50$. The $\textbf{7s-SyDIRK}$ integrator is left out since it is a composition of \textbf{1s-SyIRK} steps with scaled $h$.}
    \label{tab:generalized_rb_iterations}
\end{table}

\newpage

\section{Conclusions and Future Work}
\label{sec:conclusion}
We introduced the block algorithm for ISOSyRK methods. It uses block matrices to describe an implementation of ISOSyRK schemes that is simpler than the previous approach from~\cite{modin2019lie}. As the authors in~\cite{benzi2023solving} did for the particular case of the implicit midpoint rule, we also show that the implicit equation resulting from the block algorithm can be solved efficiently using fixed-point iterations.

For low-dimensional isospectral Lie-Poisson systems, when compared to ISOSyDIRK methods from~\cite{da2022variational} of the same order, our approach using the $6^{th}$-order 3-stage Gauss method results in better energy conservation and time execution than the $6^{th}$-order 7-stage SyDIRK from~\cite{jiang2015sixth}. For high-dimensional systems, the block algorithm still offers better Hamiltonian conservation. However, it seems to suffer from an increase in the time execution for each time step compared to the same-order ISOSyDIRK schemes. To optimize this trade-off, parallelization can be used. We plan to investigate this in future work.

We also intend to investigate whether the block algorithm can help isospectral systems for fluid dynamics problems to achieve better conservation properties. The authors in~\cite{cifani2023efficient} developed an efficient framework for the time integration of the ideal fluid dynamics on the sphere.  Their spatial discretization is an isospectral system resulting of the quantization of the Euler equations~\cite{Zeitlin1991,Hoppe1998}. They use the ISOSyRK associated with the implicit midpoint rule and, with the block algorithm, we believe we can implement their framework using higher-order ISOSyRK methods to possibly achieve better conservation of the Hamiltonian and the Casimirs of the system such as enstrophy. Their framework relies on the parallelization of several aspects of their algorithm. Since the block algorithm is fully parallelizable, we can also exploit the computational infrastructure to enjoy a more advantageous trade-off between the high-dimensionality and better conservation properties resulting from higher-order algorithms.

Recently, an isospectral discretization was proposed to another spherical fluid model: the shallow water equation~\cite{franken2024zeitlin}. Again, the implicit midpoint rule is used as a time integrator and we are interested in how higher-order ISOSyRK schemes, easily implemented with the block algorithm, can improve conservation results while still offering efficiency.

\bibliography{references.bib}

\end{document}